\providecommand{\U}[1]{\protect\rule{.1in}{.1in}}
\newtheorem{theorem}{Theorem}
\theoremstyle{plain}
\newtheorem{corollary}{Corollary}
\newtheorem{lemma}{Lemma}
\newtheorem{remark}{Remark}
\numberwithin{equation}{section}
\begin{document}
\title[Existence of Bryc random fields for $q>1.$]{Probabilistic implications of symmetries of $q$-Hermite and Al-Salam-Chihara polynomials.}
\author{Pawe\l \ J. Szab\l owski}
\address{Department of Mathematics and Information Sciences\\
Warsaw University of Technology\\
pl. Politechniki 1, 02-516 Warsaw, Poland\\
 }
\email{pszablowski@elka.pw.edu.pl; pawel.szablowski@gmail.com}
\thanks{This paper is in final form and no version of it will be submitted for
publication elsewhere.}
\date{February, 2008}
\subjclass[2000]{Primary 60J27; Secondary 33C45}
\keywords{Stationary random fields with linear regressions, existence problem,
Al-Salam-Chihara polynomials, $q$-Hermite polynomials, addition formula for
$q-$exponential functions, }

\begin{abstract}
We prove the existence of stationary random fields with linear regressions for
$q>1$ and thus close an open question posed by W. Bryc et al.. We prove this
result by describing a discrete $1$ dimensional conditional distribution and
then checking Chapman-Kolmogorov equation. Support of this distribution
consist of zeros of certain Al-Salam-Chihara polynomials. To find them we
refer to and expose known result concerning addition of $q-$ exponential
function. This leads to generalization of a well known formula $(x+y)^{n}%
=\sum_{i=0}^{n}\binom{n}{k}i^{k}H_{n-k}\left(  x\right)  H_{k}\left(
-iy\right)  ,$ where $H_{k}\left(  x\right)  $ denotes $k-$th Hermite polynomial.

\end{abstract}
\maketitle

\section{Introduction}

We consider the existence problem of stationary random fields with linear
regressions that are defined by two parameters $\left(  \rho,q\right)  $ (Bryc
random fields for brevity) for values of $q>1$ and $0<\left\vert
\rho\right\vert <1.$ Positive answer to that question closes an open question
posed by W. Bryc, W. Matysiak and P. J. Szab\l owski in their paper
\cite{AlSalamChihara}.

To do this we have to find zeros of certain Al-Salam-Chihara polynomials.
Exploiting formula for connection coefficients between Al-Salam-Chihara and
$q-$Hermite polynomials that was given in \cite{AlSalamChihara}, we are able
to relate factorization of Al-Salam-Chihara for specific values of parameters
and factorization of expression of the form:%
\begin{equation}
\sum_{k=0}^{n}%
\genfrac{[}{]}{0pt}{}{n}{k}%
_{q}q^{-k(n-k)/2}h_{n-k}\left(  x|q\right)  h_{k}\left(  y|\frac{1}{q}\right)
, \label{IsmStan_2}%
\end{equation}
where $\left\{  h_{n}\left(  x|q\right)  \right\}  _{n\geq-1}$ are so called
continuous $q-$Hermite polynomials. These expressions appear indirectly in the
proof of so called addition theorem for $q-$exponential functions (see e.g.
\cite{ContemporartyMath2000}). An interesting fact about factorization of
expression (\ref{IsmStan_2}) is that it is a generalization of well known
formula for ordinary Hermite polynomials
\[
\sum_{k=0}^{n}\binom{n}{k}H_{n-k}\left(  x\right)  i^{k}H_{k}\left(
-iy\right)  =\left(  x+y\right)  ^{n}.
\]
Hence this paper establishes a connection between probability and advanced
results in $q-$series theory. The properties discussed in the paper seem to
fit into recent interest in Al-Salam-Chihara $q-$Hermite polynomials (see e.g.
\cite{CHS2007}, \cite{Floreanini97}, \cite{LMP2005}, \cite{IS2003}) in quantum physics.

For the sake of completeness and clarity, the paper is organized as follows.
The following section contains brief recollection of basic facts concerning
Bryc random fields. In the next one we recall basic notation and necessary
facts concerning Al-Salam-Chihara polynomials and $q-$Hermite polynomials. The
following section presents main result of the paper that is existence theorem
of Bryc random fields for $q>1.$ At last final section contains longer proofs.

\section{Known facts on Bryc random fields}

The results on Bryc fields are based on \cite{Bryc1}, \cite{Bryc2} and
\cite{AlSalamChihara}.

\begin{enumerate}
\item Bryc processes are defined by two parameters $q\geq-1$ and $\rho
\in(-1,1)\backslash\left\{  0\right\}  .$ For $q\in<-1,1>$ one dimensional
distributions are uniquely defined and depend only on $q.$ Conditional
distributions are also uniquely defined and depend on $q$ and $\rho.$

\item For $q>1$ one dimensional distributions are not uniquely defined. They
only have known all moments. Conditional distribution does not exists for all
pairs $\left(  q,\rho\right)  .$ Namely they might exist only if $\rho^{2}%
\in\left\{  \frac{1}{q},\frac{1}{q^{2}},\ldots\right\}  .$

\item It is also known (see \cite{AlSalamChihara}) that for $q>1$ and
$\rho^{2}=\frac{1}{q^{m-1}}$ conditional distribution of $X_{n+1}$ given
$X_{n}=y$ is concentrated on zeros $\left\{  \chi_{j}\left(  y,q\right)
\right\}  _{j=1}^{m}$ of Al-Salam-Chihara polynomial $p_{m}\left(
x|y,\rho,q\right)  $ defined below. Moreover the masses $\left\{  \lambda
_{i}\right\}  _{i=1}^{m}$ assigned to these zeros are defined by the
equalities: $\sum_{j=1}^{m}\lambda_{j}=1,$ \newline$\sum_{j=1}^{m}\lambda
_{j}p_{k}\left(  \chi_{j}\left(  y,q\right)  |y,\rho,q\right)  \allowbreak
=\allowbreak0;k=1,\ldots m-1.$

\item It is also known that every existing Bryc field (compare. \cite{Bryc1})
satisfies the following relationship: $\mathbb{E}\left(  H_{k}\left(
X_{n+l}|q\right)  |X_{n}\right)  \allowbreak=\allowbreak\rho^{lk}H_{k}\left(
X_{n}|q\right)  ,$ where $\left\{  H_{n}\left(  x|q\right)  \right\}
_{n\geq-1}$ are so called $q-$Hermite polynomials defined by (\ref{Her}).
\end{enumerate}

Following $3$. and $4$. we see that by uniqueness of the solution of the
system of linear equations we can utter the following observation

\begin{remark}
\label{przezHermity} Let $q>1$ and $\rho=1/q^{\left(  m-1\right)  /2}$. Masses
$\left\{  \lambda_{i}\right\}  _{i=1}^{m}$ of the conditional distribution of
$X_{n+1}$ given $X_{n}=y$ are defined by the equivalent system of linear
equations: $\sum_{j=1}^{m}\lambda_{j}=1,$ $\sum_{j=1}^{m}\lambda_{i}%
H_{k}\left(  \chi_{j}\left(  y,q\right)  |q\right)  \allowbreak=\allowbreak
\rho^{k}H_{k}\left(  y|q\right)  ,$ $k=1,\ldots,m-1$ .
\end{remark}

\section{Facts concerning $q-$Hermite and Al-Salam-Chihara polynomials. Basic
notation and facts}

We adopt notation used traditionally $q-$series theory: $\left(  a;q\right)
_{0}\allowbreak=\allowbreak1,$ $\left(  a;q\right)  _{n}\allowbreak
=\allowbreak%
{\displaystyle\prod\limits_{i=0}^{n-1}}
(1-aq^{i}),$ $\left(  a_{1},\ldots,a_{,k};q\right)  _{n}\allowbreak
=\allowbreak\prod_{i=1}^{k}\left(  a_{i};q\right)  _{n},$ $\left[  0\right]
_{q}\allowbreak=\allowbreak0,$ $\left[  n\right]  _{q}\allowbreak
=\allowbreak1+\ldots q^{n-1},$ $n\geq1,$ $\left[  0\right]  _{q}%
!\allowbreak=\allowbreak1,$ $\left[  n\right]  _{q}!\allowbreak=\allowbreak%
{\displaystyle\prod\limits_{i=1}^{n}}
\left[  n\right]  _{q},$ $%
\genfrac{[}{]}{0pt}{}{n}{k}%
_{q}\allowbreak=\allowbreak\left\{
\begin{array}
[c]{ccc}%
\frac{\left[  n\right]  _{q}!}{\left[  k\right]  _{q}!\left[  n-k\right]
_{q}} & when & 0\leq k\leq n\\
0 & when & k>n
\end{array}
\right.  .$ Notice that we have: $\left(  q,q\right)  _{n}\allowbreak
=\allowbreak\left(  1-q\right)  ^{n}\left[  n\right]  _{n}!,$ $%
\genfrac{[}{]}{0pt}{}{n}{k}%
_{q}\allowbreak=\allowbreak\frac{\left(  q,q\right)  _{n}}{\left(  q,q\right)
_{k}\left(  q,q\right)  _{n-k}}.$ Let us consider also a sequences of
polynomials defined by $3-$term recurrence:
\begin{equation}
xH_{n}\left(  x|q\right)  =H_{n+1}\left(  x|q\right)  +\left[  n\right]
_{q}H_{n-1}\left(  x|q\right)  ;n\geq0,\label{Her}%
\end{equation}%
\begin{equation}
2xh_{n}\left(  x|q\right)  =h_{n+1}\left(  x|q\right)  +\left(  1-q^{n}%
\right)  h_{n-1}\left(  x|q\right)  ;n\geq0,\label{h}%
\end{equation}%
\begin{equation}
p_{n+1}\left(  x|y,\rho,q\right)  =\left(  x-\rho yq^{n}\right)  p_{n}\left(
x|y,\rho,q\right)  -\left(  1-\rho^{2}q^{n-1}\right)  \left[  n\right]
_{q}p_{n-1}\left(  x|y,\rho,q\right)  ;n\geq0,\label{p}%
\end{equation}%
\begin{equation}
B_{n+1}\left(  y|q\right)  =-q^{n}yB_{n}\left(  y|q\right)  +q^{n-1}\left[
n\right]  _{q}B_{n-1}\left(  y|q\right)  ;n\geq0,\label{_b}%
\end{equation}
with $H_{-1}\left(  x|q\right)  \allowbreak=\allowbreak h_{-1}\left(
x|q\right)  \allowbreak=\allowbreak B_{-1}\left(  y|q\right)  \allowbreak
=\allowbreak p_{-1}\left(  x|y,\rho,q\right)  \allowbreak=\allowbreak0~$and
$H_{0}\left(  x|q\right)  \allowbreak=h_{0}\left(  x|q\right)  =\allowbreak
p_{0}\left(  x|y,\rho,q\right)  \allowbreak=\allowbreak B_{0}\left(
y|q\right)  \allowbreak=\allowbreak1,~$ We will call polynomials (\ref{Her})
$q-$Hermite polynomials, (\ref{h}) (following \cite{Berg96}) continuous
$q-$Hermite polynomials. Notice that $H_{n}\left(  x;1\right)  \allowbreak
=\allowbreak H_{n}\left(  x\right)  ,$ where $\left\{  H_{n}\left(  x\right)
\right\}  _{n\geq-1}$ denote ordinary (sometimes called probabilistic) Hermite
polynomials i.e. polynomials orthogonal w.r. to measure $d\mu\left(  x\right)
=\frac{1}{\sqrt{2\pi}}\exp\left(  -x^{2}/2\right)  dx.$ Polynomials (\ref{p})
we will call Al-Salam-Chihara polynomials (see \cite{AlSalamChihara}). All
these polynomials are related to one another by the following relationships:
\begin{equation}
h_{m}\left(  x|q\right)  =\left(  1-q\right)  ^{m/2}H_{m}\left(  2x/\sqrt
{1-q}|q\right)  ;~m\geq-1,\label{H_h}%
\end{equation}
(see \cite{Berg96}).
\begin{equation}
p_{n}\left(  x|y,\rho,q\right)  =\sum_{k=0}^{n}\left[
\begin{array}
[c]{c}%
n\\
k
\end{array}
\right]  _{q}\rho^{n-k}B_{n-k}\left(  y|q\right)  H_{k}\left(  x|q\right)
,\label{rozwiniecie}%
\end{equation}
(see \cite{AlSalamChihara} ) and finally
\begin{equation}
B_{n}\left(  y|q\right)  =\left\{
\begin{array}
[c]{ccc}%
\left(  -1\right)  ^{n\left(  n-1\right)  /2}\left\vert q\right\vert
^{n\left(  n-2\right)  /2}H_{n}\left(  -\sqrt{\left\vert q\right\vert }%
y|\frac{1}{q}\right)   & when & q<0\\
i^{n}q^{n\left(  n-2\right)  /2}H_{n}\left(  i\sqrt{q}y|\frac{1}{q}\right)   &
when & q>0
\end{array}
\right.  \label{B_na_H}%
\end{equation}
(see \cite{AlSalamChihara} ). Finally examining the proof of formula (1.3) in
\cite{ContemporartyMath2000} and performing multiplication of two series we
deduce equality:
\begin{align}
&  \sum_{k=0}^{n}%
\genfrac{[}{]}{0pt}{}{n}{k}%
_{q}q^{-k\left(  n-k\right)  /2}h_{k}\left(  x|q\right)  h_{n-k}\left(
y|\frac{1}{q}\right)  \label{IsmStan}\\
&  =e^{-in\phi}\left(  -q^{\left(  1-n\right)  /2}e^{i\left(  \theta
+\phi\right)  },-q^{\left(  1-n\right)  /2}e^{i\left(  -\theta+\phi\right)
};q\right)  _{n}.\nonumber
\end{align}
where $x=\cos\theta$ and $y=\cos\phi.$

We have simple Lemma:

\begin{lemma}
\label{zamiana}$e^{-in\phi}\left(  -q^{\left(  1-n\right)  /2}e^{i\left(
\theta+\phi\right)  },-q^{\left(  1-n\right)  /2}e^{i\left(  -\theta
+\phi\right)  };q\right)  _{n}\allowbreak=$\newline$2^{n}\allowbreak\left\{
\begin{array}
[c]{ccc}%
\prod_{j=1}^{k}t_{2j-1}\left(  x,y,q\right)  & if & n=2k\\
\prod_{j=0}^{k}t_{2j}\left(  x,y,q\right)  & if & n=2k+1
\end{array}
\right.  ,$ \newline where $t_{n}\left(  x,y,q\right)  \allowbreak=\allowbreak
x^{2}+y^{2}\allowbreak+\allowbreak xy\left(  q^{n/2}+q^{-n/2}\right)
\allowbreak+\allowbreak\left(  q^{n}+q^{-n}-2\right)  /4$ for $n\geq1$ and
$t_{0}\left(  x,y,q\right)  \allowbreak=\allowbreak x+y.$
\end{lemma}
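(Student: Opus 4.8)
The plan is to prove the identity by induction on $n$, splitting into the even and odd cases, and using the shifted $q$-Hermite recurrence to match the product structure on the right-hand side. First I would observe that the left-hand side of Lemma~\ref{zamiana}, by the equality (\ref{IsmStan}), equals the convolution-type sum $\sum_{k=0}^{n}\genfrac{[}{]}{0pt}{}{n}{k}_{q}q^{-k(n-k)/2}h_{k}(x|q)h_{n-k}(y|1/q)$. Denote this quantity by $S_n(x,y,q)$. Using the three-term recurrence (\ref{h}) for $h_k(\cdot|q)$ in the first factor and the analogous recurrence for $h_{n-k}(\cdot|1/q)$ in the second factor, together with the $q$-Pascal relations for $\genfrac{[}{]}{0pt}{}{n}{k}_{q}$, I expect to derive a two-step recurrence expressing $S_{n+1}$ in terms of $S_n$ and $S_{n-1}$ with polynomial coefficients in $x,y$ and powers of $q^{1/2}$. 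The base cases $S_0 = 1$ and $S_1 = 2(x+y) = 2 t_0(x,y,q)$ are immediate from $h_0 = 1$, $h_1(x|q) = 2x$.

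Next I would compute, from the right-hand side, the corresponding recurrence satisfied by the products $P_{2k} = 2^{2k}\prod_{j=1}^{k}t_{2j-1}$ and $P_{2k+1} = 2^{2k+1}\prod_{j=0}^{k}t_{2j}$. Passing from $P_n$ to $P_{n+2}$ simply multiplies by $4\,t_{n+1}$, where $t_{n+1}(x,y,q) = x^2+y^2+xy(q^{(n+1)/2}+q^{-(n+1)/2})+(q^{n+1}+q^{-n+1}-2)/4$. The crux is therefore to show that this single-factor step for the RHS is consistent with the two-step recurrence obtained for $S_n$ on the left. Concretely, I would verify that the quadratic expression $4\,t_{n+1}$ arises as the effective coefficient when one composes two steps of the $S$-recurrence, i.e. that $S_{n+2} = 4\,t_{n+1}\,S_n$ holds as a consequence of the one-step relations linking $S_{n+2},S_{n+1},S_n$ and $S_{n+1},S_n,S_{n-1}$. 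An alternative, and probably cleaner, route is to work directly with the finite $q$-shifted-factorial product on the right of (\ref{IsmStan}): with $x=\cos\theta$, $y=\cos\phi$, the factor $(-q^{(1-n)/2}e^{i(\theta+\phi)},-q^{(1-n)/2}e^{i(-\theta+\phi)};q)_n$ is a product of $n$ terms of the form $(1+q^{(1-n)/2+j}e^{i(\theta+\phi)})(1+q^{(1-n)/2+j}e^{i(-\theta+\phi)})$ for $j=0,\dots,n-1$; pairing the $j$-th term with the $(n-1-j)$-th term and using $2\cos\theta\cos\phi = \cos(\theta+\phi)+\cos(\theta-\phi)$ together with $2\cos^2\theta = 1+\cos 2\theta$, each such pair collapses to $4\,t_{|n-1-2j|}(x,y,q)$ after the substitution $x^2 = (1+\cos 2\theta)/2$, $y^2 = (1+\cos 2\phi)/2$. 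Counting the exponents $|n-1-2j|$ as $j$ ranges over $0,\dots,n-1$ gives exactly the odd indices $1,3,\dots,n-1$ each once when $n$ is even, and the even indices $0,2,\dots,n-1$ (with the middle term $t_0$ coming from the unpaired $j=(n-1)/2$) when $n$ is odd, which is precisely the claimed product, and the prefactor $e^{-in\phi}$ together with the $2^n$ normalization comes out from collecting $e^{i\phi}$ factors and the factor $2$ in each $2\cos$.

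The main obstacle I anticipate is bookkeeping: correctly tracking the powers of $q^{1/2}$ and the phase factor $e^{-in\phi}$ through the pairing argument so that the symmetric combination $q^{m/2}+q^{-m/2}$ (which is what appears in $t_m$) emerges cleanly, and handling the parity split — in particular making sure the unpaired central factor in the odd case really reduces to $t_0 = x+y$ rather than to a $t_m$ with $m\ge 1$. Once the pairing is set up carefully this is a routine but delicate trigonometric simplification; I would present it as a direct computation rather than an induction, since the product on the right of (\ref{IsmStan}) already exhibits the needed factorized form and only needs to be rewritten in the variables $x,y$.
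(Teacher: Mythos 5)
Your second route---pairing the $j$-th and $(n-1-j)$-th factors of the $q$-shifted factorial, whose exponents $q^{\pm(n-1-2j)/2}$ are reciprocal, and collapsing each such quadruple of factors together with its share $e^{-2i\phi}$ of the phase into $4\,t_{n-1-2j}(x,y,q)$---is exactly the paper's proof, which organizes the same pairing as the ratio $A_{n}/A_{n-2}$ followed by a two-step induction from the base cases $A_{0}=1$ and $A_{1}=2(x+y)$. The computation is correct (including the unpaired central factor contributing $2t_{0}=2(x+y)$ when $n$ is odd and the count $2^{n}$ of the factors of $2$), so the proposal matches the paper's argument in essence; the first, recurrence-based route you sketch is not needed.
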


Proof in section \ref{dowody}.

Now applying (\ref{H_h}, \ref{B_na_H}) and (\ref{rozwiniecie}) with
$\rho\allowbreak=\allowbreak\frac{1}{q^{\left(  m-1\right)  /2}}$ we get the
following Corollary

\begin{corollary}
\label{factorization}
\begin{subequations}
\label{uog}%
\begin{gather}
\sum_{k=0}^{m}\left[
\begin{array}
[c]{c}%
m\\
k
\end{array}
\right]  _{q}q^{-\left(  m-1\right)  \left(  m-k\right)  /2}B_{m-k}\left(
y|q\right)  H_{k}\left(  x|q\right)  =\label{uog1}\\
p_{m}\left(  x|y,1/q^{(m-1)/2},q\right)  =\left\{
\begin{array}
[c]{ccc}%
\prod_{j=1}^{i}v_{2j-1}\left(  x,-y,q\right)  & if & m=2i\\
\prod_{j=0}^{i}v_{2j}\left(  x,-y,q\right)  & if & m=2i+1
\end{array}
\right.  ;i\in\mathbb{N}, \label{uog2}%
\end{gather}
where $v_{0}\left(  x,y,q\right)  \allowbreak=\allowbreak x+y,~v_{n}%
(x,y,q)\allowbreak=\allowbreak x^{2}+y^{2}+xy\left(  q^{n/2}+q^{-n/2}\right)
-\frac{q^{n}+q^{-n}-2}{q-1},$ $n\geq1.$
\end{subequations}
\end{corollary}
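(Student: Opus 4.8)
The plan is to treat (\ref{uog1}) as a trivial specialization and to obtain (\ref{uog2}) by transporting the left side of (\ref{uog1}) onto the left side of (\ref{IsmStan}) through a linear change of variable, and then quoting Lemma \ref{zamiana}. Indeed (\ref{uog1}) is immediate: setting $n=m$ and $\rho=q^{-(m-1)/2}$ in (\ref{rozwiniecie}) gives $\rho^{m-k}=q^{-(m-1)(m-k)/2}$, which is the first equality. All of the content is in (\ref{uog2}).

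First I would rewrite each summand of the left side of (\ref{uog1}) in terms of continuous $q$-Hermite polynomials. Applying the $q>0$ branch of (\ref{B_na_H}) gives $B_{m-k}(y|q)=i^{m-k}q^{(m-k)(m-k-2)/2}H_{m-k}(i\sqrt q\,y|1/q)$, and then (\ref{H_h}) is used twice: with parameter $q$ to write $H_k(x|q)=(1-q)^{-k/2}h_k\left(\tfrac{\sqrt{1-q}}{2}x|q\right)$, and with parameter $1/q$ to write $H_{m-k}(i\sqrt q\,y|1/q)=(1-1/q)^{-(m-k)/2}h_{m-k}\left(\tfrac{i\sqrt{q-1}}{2}y|1/q\right)$, where I used $\sqrt q\,\sqrt{1-1/q}=\sqrt{q-1}$. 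The only real work in this step is the bookkeeping of scalars: using $(1-1/q)^{-(m-k)/2}=q^{(m-k)/2}(q-1)^{-(m-k)/2}$ and $\sqrt{1-q}=i\sqrt{q-1}$, the scalar coefficient of $\genfrac{[}{]}{0pt}{}{m}{k}_{q}\,h_k(\cdot|q)\,h_{m-k}(\cdot|1/q)$ collapses to $i^{m}(q-1)^{-m/2}(-1)^{k}q^{-k(m-k)/2}$ (the $k$-dependent powers of $q$ combine to $q^{-k(m-k)/2}$, matching the coefficient in (\ref{IsmStan}), and the powers of $i$, $1-q$ and $q-1$ give the rest). Setting $\tilde x=\tfrac{\sqrt{1-q}}{2}x=\tfrac{i}{2}\sqrt{q-1}\,x$ and $\tilde y=\tfrac{i}{2}\sqrt{q-1}\,y$ and absorbing the $(-1)^{k}$ into $h_k$ via the parity $h_k(-u|q)=(-1)^kh_k(u|q)$, the left side of (\ref{uog1}) becomes
\[
i^{m}(q-1)^{-m/2}\sum_{k=0}^{m}\genfrac{[}{]}{0pt}{}{m}{k}_{q}q^{-k(m-k)/2}\,h_k(-\tilde x|q)\,h_{m-k}(\tilde y|1/q),
\]
which is $i^{m}(q-1)^{-m/2}$ times the left side of (\ref{IsmStan}) with $x$ replaced by $-\tilde x$ and $y$ by $\tilde y$.

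Next I would apply (\ref{IsmStan}) and Lemma \ref{zamiana} to rewrite that sum as $2^{m}$ times a product of factors $t_n(-\tilde x,\tilde y,q)$: over $n\in\{1,3,\dots,m-1\}$ if $m$ is even, and over $n\in\{0,2,\dots,m-1\}$ if $m$ is odd. Substituting $\tilde x,\tilde y$ and using $(-\tilde x)^2=-\tfrac{q-1}{4}x^2$, $\tilde y^2=-\tfrac{q-1}{4}y^2$, $(-\tilde x)\tilde y=\tfrac{q-1}{4}xy$ and $\tfrac{q^n+q^{-n}-2}{4}=-\tfrac{q-1}{4}\cdot\tfrac{q^n+q^{-n}-2}{q-1}$, a short computation gives $t_n(-\tilde x,\tilde y,q)=-\tfrac{q-1}{4}\,v_n(x,-y,q)$ for $n\ge1$ and $t_0(-\tilde x,\tilde y,q)=\tilde y-\tilde x=-\tfrac{i}{2}\sqrt{q-1}\,v_0(x,-y,q)$.

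To finish, I would multiply out all the constants. The prefactor $i^{m}(q-1)^{-m/2}2^{m}$, times the $\lfloor m/2\rfloor$ copies of $-\tfrac{q-1}{4}$ (and, when $m$ is odd, the single extra $-\tfrac{i}{2}\sqrt{q-1}$), reduces to $1$: the powers of $2$, of $q-1$ and of $i$ all cancel. This is the one computation that must actually be carried out, and it is cleanest to treat $m=2i$ and $m=2i+1$ separately, the split being forced by the exceptional form of $t_0$ (equivalently $v_0$). What is then left is precisely $\prod_{j=1}^{i}v_{2j-1}(x,-y,q)$ (respectively $\prod_{j=0}^{i}v_{2j}(x,-y,q)$), which is (\ref{uog2}). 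One final remark: the ambiguous branch $\sqrt{1-q}=i\sqrt{q-1}$ (and the roots hidden in (\ref{B_na_H}) and (\ref{H_h})) is harmless, since both sides of (\ref{uog}) are polynomials in $x,y$ with coefficients rational in $q$ and $q^{1/2}$, so a single consistent choice of roots validates the identity. The main obstacle is precisely this constant-chasing together with the even/odd case split; there is no deeper difficulty once one has (\ref{IsmStan}) and Lemma \ref{zamiana}.
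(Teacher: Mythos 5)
Your proposal is correct and follows essentially the same route as the paper: obtain (\ref{uog1}) from (\ref{rozwiniecie}), convert $B_{m-k}$ and $H_k$ into continuous $q$-Hermite polynomials at the arguments $\pm i\sqrt{q-1}\,x/2$, $i\sqrt{q-1}\,y/2$ via (\ref{B_na_H}) and (\ref{H_h}), collapse the powers of $q$ to $q^{-k(m-k)/2}$ so that (\ref{IsmStan}) and Lemma \ref{zamiana} apply, and then translate the $t_n$ factors into $v_n(x,-y,q)$ with a parity-split constant check. Your constant bookkeeping (in particular $t_n(-\tilde x,\tilde y,q)=-\tfrac{q-1}{4}v_n(x,-y,q)$ and the cancellation of the prefactor $i^m(q-1)^{-m/2}2^m$) checks out, and is in fact cleaner than the paper's own write-up of that step.
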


We also have the following remark.

\begin{remark}
Letting $q->1$ in (\ref{uog}) one gets well known identity for ordinary
Hermite polynomials: $\sum_{i=0}^{m}\binom{m}{k}i^{m-k}H_{m-k}\left(
-iy\right)  H_{k}\left(  x\right)  =\left(  x+y\right)  ^{m},$ since $\forall
k\in\mathbb{N}:B_{k}\left(  y,1\right)  =i^{k}H_{k}\left(  iy;1\right)
\allowbreak=\allowbreak i^{k}H_{k}\left(  iy\right)  $ and $v_{k}\left(
x,y,1\right)  \allowbreak=\allowbreak\left(  x+y\right)  ^{2}.$
\end{remark}

\section{Main results}

Let $\chi_{-n}\left(  y,q\right)  $ and $\chi_{n}\left(  y,q\right)  $ denote
two roots of the equation $v_{n}(x,-y,q)=0,$ for $n\geq1$ and let us put
$\chi_{0}\left(  y,q\right)  \allowbreak=\allowbreak y.$ It is easy to note
that $\chi_{\pm n}\left(  t,q\right)  \allowbreak=\allowbreak t\left(
q^{n/2}+q^{-n/2}\right)  /2\allowbreak+\allowbreak\sqrt{t^{2}+\frac{4}{q-1}%
}\left(  q^{\pm n/2}-q^{-\pm n/2}\right)  /2.$ Let us also define the
following finite subsets of integers for $m\geq1$:
\[
\left(  \mathbf{m}\right)  =\left\{
\begin{array}
[c]{ccc}%
\left\{  0\right\}  & when & m=1\\
\left\{  -\left(  2k-1\right)  ,\ldots,\left(  2k-1\right)  \right\}  & when &
m=2k;k\geq1\\
\left\{  -2\left(  k-1\right)  ,\ldots,0,\ldots,2\left(  k-1\right)  \right\}
& when & m=2k-1;k\geq2
\end{array}
\right.  .
\]
We have the following simple lemma:

\begin{lemma}
\label{dodawanie}$\left(  \mathbf{m}\right)  +\left(  \mathbf{n}\right)
=\left\{  i+j:i\in\left(  \mathbf{m}\right)  ,j\in\left(  \mathbf{n}\right)
\right\}  =\left(  \mathbf{m+n-1}\right)  .$
\end{lemma}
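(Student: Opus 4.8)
The plan is to first distill from the displayed definition a single uniform description of the sets $(\mathbf{m})$. Examining the three branches one sees that in every case $(\mathbf{m})$ consists precisely of the integers $j$ with $|j|\leq m-1$ and $j\equiv m-1\pmod 2$; equivalently,
\[
(\mathbf{m})=\{\,m-1-2t:\ t=0,1,\dots,m-1\,\}.
\]
I would justify this by going through the cases $m=1$, $m=2k$, $m=2k-1$ and checking that the extreme elements ($\pm(m-1)$), the common step ($2$) and the cardinality ($m$) all agree with the formula; the only subtlety is in reading the ellipses correctly, namely that $(\mathbf{2k})$ is the set of \emph{odd} integers in the range $-(2k-1)$ to $2k-1$ while $(\mathbf{2k-1})$ is the set of \emph{even} integers in the range $-2(k-1)$ to $2(k-1)$, with $0$ included.

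With this parametrisation the identity is essentially immediate. Substituting and collecting terms,
\[
(\mathbf{m})+(\mathbf{n})=\{\,(m+n-2)-2(s+t):\ 0\leq s\leq m-1,\ 0\leq t\leq n-1\,\}.
\]
As $s$ ranges over $\{0,\dots,m-1\}$ and $t$ over $\{0,\dots,n-1\}$ independently, $u:=s+t$ ranges over every integer in $\{0,1,\dots,(m-1)+(n-1)\}$ (each summand runs through an interval of integers beginning at $0$, so all intermediate sums occur and none larger). Hence
\[
(\mathbf{m})+(\mathbf{n})=\{\,(m+n-2)-2u:\ 0\leq u\leq m+n-2\,\},
\]
which is exactly the uniform description of $(\mathbf{m+n-1})$, since $m+n-2=(m+n-1)-1$.

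I do not expect any genuine obstacle: the statement is elementary once the right normal form for $(\mathbf{m})$ is written down. If one prefers, the two inclusions can be argued separately — ``$\subseteq$'' follows from the range bound $(\mathbf{m})\subseteq\{j:|j|\leq m-1\}$ together with the parity observation, and ``$\supseteq$'' from the standard fact that the sumset of two intervals of consecutive integers is again such an interval — but the direct computation above handles both directions at once. The one step that needs care is therefore the first one, i.e.\ the correct interpretation of the ellipsis notation in the definition of $(\mathbf{m})$.
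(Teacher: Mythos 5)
Your proof is correct and follows essentially the same route as the paper's: identify each $(\mathbf{m})$ as an arithmetic progression of step $2$ with extremes $\pm(m-1)$, note the parity of the sums, and pin down the resulting set by its maximum. You are in fact slightly more careful than the paper, which merely asserts (``we easily see'') the step you justify explicitly, namely that the sumset has no gaps because each summand index runs through an interval of consecutive integers starting at $0$.
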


Applying the above mentioned corollary and remarks we see that the set$,$
\newline$\left\{  \chi_{i}\left(  y,q\right)  :i\in\left(  \mathbf{m}\right)
\right\}  $ contain roots of the equation: $0\allowbreak=\allowbreak
p_{m}\left(  x|y,1/q^{(m-1)/2},q\right)  .$

These roots have some special properties, that we will put together in the
following lemma where we denoted $\overset{\_}{\mathbb{Z}}\allowbreak
=\allowbreak\mathbb{Z}\backslash\left\{  0\right\}  .$

\begin{lemma}
\label{uproszczenia}For all $m,n\in\overset{\_}{\mathbb{Z}}$ ; $y\in
\mathbb{C}$ and $q\neq1$we have: i) $\chi_{m}^{2}\left(  y,q\right)
+4/\left(  q-1\right)  \allowbreak=\allowbreak\left(  y\frac{q^{m/2}-q^{-m/2}%
}{2}+\sqrt{y^{2}+\frac{4}{q-1}}\frac{q^{m/2}+q^{-m/2}}{2}\right)  ^{2},$ ii)
$\chi_{m}(\chi_{n}\left(  y,q\right)  ,q)=\chi_{n+m}\left(  y,q\right)  .$
\end{lemma}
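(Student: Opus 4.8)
The plan is to work directly from the closed formula
\[
\chi_{\pm n}\left(  t,q\right)  =t\frac{q^{n/2}+q^{-n/2}}{2}+\sqrt{t^{2}+\tfrac{4}{q-1}}\,\frac{q^{\pm n/2}-q^{\mp n/2}}{2}
\]
given just before the statement, treating both parts as algebraic identities in the formal quantities $t$, $q^{m/2}$, $q^{n/2}$ and the single square root $s(t):=\sqrt{t^{2}+4/(q-1)}$. The key observation, which I would isolate first, is that if one writes $c_{m}:=(q^{m/2}+q^{-m/2})/2$ and $d_{m}:=(q^{m/2}-q^{-m/2})/2$, then $\chi_{m}(t,q)=c_{m}t+d_{m}s(t)$ and, crucially, $c_{m}^{2}-d_{m}^{2}=1$ while $c_{m}c_{n}+d_{m}d_{n}=c_{m+n}$ and $c_{m}d_{n}+d_{m}c_{n}=d_{m+n}$ — i.e. the pair $(c_{m},d_{m})$ behaves exactly like $(\cosh,\sinh)$ of $\tfrac{m}{2}\log q$, so these are the hyperbolic addition formulas. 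These three identities are one-line verifications by expanding the definitions of $c$ and $d$.

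For part (i), I would compute $\chi_{m}^{2}(t,q)+4/(q-1)=(c_{m}t+d_{m}s(t))^{2}+4/(q-1)$. Expanding gives $c_{m}^{2}t^{2}+2c_{m}d_{m}\,t\,s(t)+d_{m}^{2}s(t)^{2}+4/(q-1)$. Using $s(t)^{2}=t^{2}+4/(q-1)$ and then $c_{m}^{2}+d_{m}^{2}=c_{m}^{2}+(c_{m}^{2}-1)=2c_{m}^{2}-1$... more cleanly: collect the $t^{2}$ terms as $(c_{m}^{2}+d_{m}^{2})t^{2}$ and the constant terms as $(d_{m}^{2}+1)\cdot\tfrac{4}{q-1}$, then use $d_{m}^{2}+1=c_{m}^{2}$, so the whole expression becomes $(c_{m}^{2}+d_{m}^{2})t^{2}+2c_{m}d_{m}\,t\,s(t)+c_{m}^{2}\cdot\tfrac{4}{q-1}$. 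The target right-hand side is $(d_{m}t+c_{m}s(t))^{2}=d_{m}^{2}t^{2}+2c_{m}d_{m}\,t\,s(t)+c_{m}^{2}s(t)^{2}=(d_{m}^{2}+c_{m}^{2})t^{2}+2c_{m}d_{m}\,t\,s(t)+c_{m}^{2}\cdot\tfrac{4}{q-1}$, which matches. (Note $d_{m}t+c_{m}s(t)=y\frac{q^{m/2}-q^{-m/2}}{2}+\sqrt{y^{2}+\frac{4}{q-1}}\,\frac{q^{m/2}+q^{-m/2}}{2}$ with $t=y$, exactly the claimed square root.)

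For part (ii) I would substitute $u:=\chi_{n}(y,q)=c_{n}y+d_{n}s(y)$ into $\chi_{m}(u,q)=c_{m}u+d_{m}s(u)$. The term $c_{m}u=c_{m}c_{n}y+c_{m}d_{n}s(y)$ is immediate. The only subtlety — and the step I expect to be the main obstacle — is evaluating $s(u)=\sqrt{u^{2}+4/(q-1)}$: by part (i) applied with $m\to n$ we have $u^{2}+4/(q-1)=(d_{n}y+c_{n}s(y))^{2}$, so $s(u)=\pm(d_{n}y+c_{n}s(y))$, and one must check the sign is $+$ (this is what makes the semigroup law hold rather than giving $\chi_{m-n}$ or $\chi_{-m+n}$). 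Since the statement allows $y\in\mathbb C$ there is no ordering to exploit, so I would fix the sign by a continuity/normalization argument: both sides are the same branch of an algebraic function, and one pins down the branch by checking one convenient value, e.g. $y$ with $s(y)$ chosen so that $d_{n}y+c_{n}s(y)$ visibly agrees with the principal root of $u^{2}+4/(q-1)$ (or simply adopt the convention, already implicit in the definition of $\chi$, that $s$ is additive under the $\chi$-iteration). Granting $s(u)=d_{n}y+c_{n}s(y)$, we get $\chi_{m}(u,q)=(c_{m}c_{n}+d_{m}d_{n})y+(c_{m}d_{n}+d_{m}c_{n})s(y)=c_{m+n}y+d_{m+n}s(y)=\chi_{m+n}(y,q)$, using the hyperbolic addition identities from the first paragraph. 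This closes the lemma.
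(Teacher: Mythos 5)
Your proposal is correct and follows essentially the same route as the paper: part (i) is a direct expansion, and part (ii) substitutes $\chi_{n}(y,q)$ into $\chi_{m}$ and uses (i) to resolve the nested radical before recombining via the hyperbolic-type addition formulas for $(q^{m/2}\pm q^{-m/2})/2$. You are in fact more careful than the paper on the one delicate point --- the choice of branch when writing $\sqrt{\chi_{n}^{2}(y,q)+\tfrac{4}{q-1}}=+\bigl(y\tfrac{q^{n/2}-q^{-n/2}}{2}+\sqrt{y^{2}+\tfrac{4}{q-1}}\,\tfrac{q^{n/2}+q^{-n/2}}{2}\bigr)$ --- which the paper's proof silently assumes.
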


Proof in section \ref{dowody}.

Let $\mathbf{S}_{m}\left(  y,q\right)  \allowbreak=\allowbreak\left\{
\chi_{k}\left(  y,q\right)  :k\in\left(  \mathbf{m}\right)  \right\}  .$

\begin{remark}
Notice that $\mathbf{S}_{n}\left(  y,q\right)  \varsubsetneq\mathbf{S}%
_{m}\left(  y,q\right)  $ when $n<m.$
\end{remark}

As it follows from \cite{AlSalamChihara} elements of $\mathbf{S}_{m}\left(
y,q\right)  $ constitute a support of a discrete type distribution
$\Lambda\left(  m,y,q\right)  $ whose masses will be enumerated in the similar
way as the points of support. Thus, say, mass $\lambda_{i,m}\left(
y,q\right)  $ is the mass of $\chi_{i}\left(  y,q\right)  $ in $\Lambda\left(
m,y,q\right)  ,$ $i\in\left(  \mathbf{m}\right)  $ . Besides from
\cite{AlSalamChihara} it follows that masses $\lambda_{i,m}\left(  y,q\right)
$ satisfy also equations: $\sum_{j=1}^{m}\lambda_{j,m}\left(  y,q\right)  =1,$
$\sum_{i\in\left(  \mathbf{m}\right)  }\lambda_{i,m}\left(  y,q\right)
p_{j}\left(  \chi_{i}\left(  y,q\right)  |y,1/q^{(m-1)/2},q\right)  =0,$ for
$j=1,\ldots,m-1$ or equivalently $\sum_{j=1}^{m}\lambda_{j,m}\left(
y,q\right)  =1,$ $\sum_{i\in\left(  \mathbf{m}\right)  }\lambda_{i,m}\left(
y,q\right)  H_{j}\left(  \chi_{i}\left(  y,q\right)  |q\right)  \allowbreak
=\allowbreak\frac{1}{q^{j\left(  m-1\right)  /2}}H_{j}\left(  y|q\right)  $ by
Remark \ref{przezHermity}.

We also have the following observation:

\begin{lemma}
\label{zlozenie}Let $\Lambda\left(  m,y,q\right)  $ be a discrete distribution
concentrated at points $\mathbf{S}_{m}\left(  y,q\right)  $ with masses
$\lambda_{i,m}\left(  y,q\right)  $, $i\in\left(  \mathbf{m}\right)  .$
Suppose that for every point $z\in\mathbf{S}_{m}\left(  y,q\right)  $ we
consider distribution $\Lambda\left(  n,z,q\right)  $ with masses
$\lambda_{j,n}\left(  z,q\right)  ,$ $j\in\left(  \mathbf{n}\right)  .$ Thus
for each point $y$ we define a distribution concentrated at points $\chi
_{i}\left(  \chi_{j}\left(  y,q\right)  ,q\right)  $ with masses $\lambda
_{i}\left(  y,q\right)  \lambda_{j}\left(  \chi_{i}\left(  y,q\right)
,q\right)  $ for $i\in\left(  \mathbf{m}\right)  ,j\in\left(  \mathbf{n}%
\right)  .$ Then this new distribution is concentrated at $n+m-1$ points of
$\mathbf{S}_{n+m-1}\left(  y,q\right)  $ and masses assigned to them are
$\lambda_{k}^{\ast}\left(  y,q\right)  \allowbreak=\allowbreak\sum
_{i\in\left(  \mathbf{m}\right)  ,j\in\left(  \mathbf{n}\right)
;i+j=k}\lambda_{i}\left(  y,q\right)  \lambda_{j}\left(  \chi_{i}\left(
y,q\right)  ,q\right)  ;k\in\mathbf{S}_{n+m-1}.$ Moreover $\lambda_{k}^{\ast
}\left(  y,q\right)  =\lambda_{k,m+n-1}\left(  y,q\right)  ,$ $k\in\left(
\mathbf{n+m-}1\right)  .$
\end{lemma}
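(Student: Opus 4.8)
The plan is to recognise the composed distribution $\Lambda^{\ast}$ as $\Lambda(m+n-1,y,q)$ through uniqueness of the masses as the solution of a Vandermonde‑type linear system. First I would pin down the support: by Lemma~\ref{uproszczenia}(ii) every atom of $\Lambda^{\ast}$ equals $\chi_{i}(\chi_{j}(y,q),q)=\chi_{i+j}(y,q)$ for some $i\in(\mathbf{m})$, $j\in(\mathbf{n})$, and by Lemma~\ref{dodawanie} the indices $i+j$ run through all of $(\mathbf{m+n-1})$; hence $\Lambda^{\ast}$ is carried by $\mathbf{S}_{m+n-1}(y,q)$ and $\lambda_{k}^{\ast}(y,q)$ is the stated sum of $\lambda_{i,m}(y,q)\lambda_{j,n}(\chi_{i}(y,q),q)$ over $i+j=k$. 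Summing first over $j\in(\mathbf{n})$ (each $\Lambda(n,z,q)$ is a probability distribution) and then over $i\in(\mathbf{m})$ gives $\sum_{k}\lambda_{k}^{\ast}(y,q)=1$.

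The heart of the matter is to strengthen the moment relations recalled just above the lemma (from \cite{AlSalamChihara} and Remark~\ref{przezHermity}): I would show that for every admissible size $r$ and every $z$,
\[
\sum_{k\in(\mathbf{r})}\lambda_{k,r}(z,q)\,H_{j}\bigl(\chi_{k}(z,q)\,|\,q\bigr)=q^{-j(r-1)/2}\,H_{j}(z|q)\qquad\text{for \emph{every} }j\ge 0,
\]
not merely for $j\le r-1$. The mechanism is that with $\rho=q^{-(r-1)/2}$ the coefficient $1-\rho^{2}q^{r-1}$ occurring in the recursion (\ref{p}) vanishes, so $p_{r+1}(x|z,\rho,q)=(x-\rho z q^{r})p_{r}(x|z,\rho,q)$ and, inductively, $p_{r}(\cdot|z,\rho,q)$ divides $p_{j}(\cdot|z,\rho,q)$ for all $j\ge r$; since by Corollary~\ref{factorization} the points $\chi_{k}(z,q)$, $k\in(\mathbf{r})$, are exactly the zeros of $p_{r}(\cdot|z,\rho,q)$, we obtain $\sum_{k}\lambda_{k,r}(z,q)p_{j}(\chi_{k}(z,q)|z,\rho,q)=0$ for all $j\ge1$ (for $1\le j\le r-1$ this is the defining property of the masses; for $j\ge r$ it is the divisibility just noted). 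Expanding $p_{j}$ by the connection formula (\ref{rozwiniecie}), substituting the induction hypothesis for the terms of order $<j$, and using $\sum_{k=0}^{j}\binom{j}{k}_{q}B_{j-k}(z|q)H_{k}(z|q)=p_{j}(z|z,1,q)=0$ for $j\ge1$ (a short induction on the recursion (\ref{p}) taken at $x=z$, $\rho=1$, where the coefficient of $p_{0}$ also collapses), one solves for the remaining top term and recovers the displayed identity at index $j$; induction on $j$ from the trivial case $j=0$ completes it.

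Granting this, a two‑fold application of the extended identity together with Fubini gives, for every $j$,
\begin{multline*}
\sum_{k}\lambda_{k}^{\ast}(y,q)H_{j}(\chi_{k}(y,q)|q)=\sum_{i\in(\mathbf{m})}\lambda_{i,m}(y,q)\sum_{l\in(\mathbf{n})}\lambda_{l,n}(\chi_{i}(y,q),q)H_{j}\bigl(\chi_{l}(\chi_{i}(y,q),q)|q\bigr)\\
=q^{-j(m+n-2)/2}H_{j}(y|q),
\end{multline*}
which is exactly the relation characterising $\Lambda(m+n-1,y,q)$ in Remark~\ref{przezHermity}. Restricting to $j=0,1,\dots,m+n-2$ we get $m+n-1$ linear equations in the $m+n-1$ masses supported on $\mathbf{S}_{m+n-1}(y,q)$, and this system is nonsingular because $H_{0},\dots,H_{m+n-2}$ span the polynomials of degree $\le m+n-2$ and the $m+n-1$ nodes $\chi_{k}(y,q)$ are distinct; both $(\lambda_{k}^{\ast})$ and $(\lambda_{k,m+n-1}(y,q))$ solve it, so $\lambda_{k}^{\ast}(y,q)=\lambda_{k,m+n-1}(y,q)$ for all $k\in(\mathbf{m+n-1})$, and in particular $\Lambda^{\ast}$ charges every one of the $m+n-1$ points. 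I expect the middle paragraph to be the real obstacle: the cited results only supply the moment relations for $j\le r-1$, whereas one needs them up to $j=m+n-2$; the point that unlocks this is precisely the collapse of the recursion coefficient $1-\rho^{2}q^{r-1}$ at the critical index, which forces $p_{r}$ to divide every higher $p_{j}$.
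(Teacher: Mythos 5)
Your proposal is correct and follows the same overall strategy as the paper's proof: identify the support via Lemma \ref{uproszczenia}(ii) and Lemma \ref{dodawanie}, verify that the composed masses $\lambda_{k}^{\ast}$ satisfy the linear system of Remark \ref{przezHermity} by a two-fold application of the Hermite-moment relations, and conclude by uniqueness of the solution. The substantive difference is your middle paragraph, and it addresses a point the paper's proof silently assumes. The paper applies $\sum_{j\in\left(\mathbf{n}\right)}\lambda_{j,n}\left(z,q\right)H_{r}\left(\chi_{j}\left(z,q\right)|q\right)=q^{-r\left(n-1\right)/2}H_{r}\left(z|q\right)$ for every $r$ up to $m+n-2$, whereas the defining equations from \cite{AlSalamChihara} and Remark \ref{przezHermity} only guarantee this for $r\leq n-1$ (and likewise for the outer sum over $\left(\mathbf{m}\right)$). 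You correctly flag this as the crux and close it: since $\rho^{2}q^{n-1}=1$ annihilates the coefficient $\left(1-\rho^{2}q^{s-1}\right)\left[s\right]_{q}$ in (\ref{p}) at $s=n$, one gets $p_{n+1}=\left(x-\rho zq^{n}\right)p_{n}$ and, inductively, that $p_{n}\left(\cdot|z,\rho,q\right)$ divides $p_{j}\left(\cdot|z,\rho,q\right)$ for all $j\geq n$, so the relations $\sum_{k}\lambda_{k,n}p_{j}\left(\chi_{k}\right)=0$ hold for all $j\geq1$; passing through (\ref{rozwiniecie}) and the identity $p_{j}\left(z|z,1,q\right)=0$ for $j\geq1$ (which indeed follows by induction on (\ref{p}) at $x=z$, $\rho=1$) isolates the top term as $M_{j}+\rho^{j}\left(p_{j}\left(z|z,1,q\right)-H_{j}\left(z|q\right)\right)$ and yields $M_{j}=\rho^{j}H_{j}\left(z|q\right)$ for all $j$. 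This extension is genuinely needed and your derivation of it is sound; your final Vandermonde-type uniqueness argument (which requires the nodes $\chi_{k}\left(y,q\right)$ to be distinct, true for real $y$ and $q>1$ since $\chi_{k}$ is strictly monotone in $k$) is equivalent to the paper's appeal to Remark \ref{przezHermity}. In short: same route as the paper, but you supply, and correctly prove, a lemma that the paper's argument uses without justification.
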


Proof in section \ref{dowody}. From this lemma it follows that every
conditional distributions of Bryc process with $q>1$ and $\rho=1/q^{m}$ :
$m\in\mathbb{N}$ satisfies Chapman-Kolmogorov condition. Namely we have the
following Theorem

\begin{theorem}
Suppose we consider family of discrete one step distributions: \newline%
$\left\{  \Lambda\left(  m,y,q\right)  \right\}  _{y\in\mathbb{R}}$ for some
fixed $m\geq2$ and $q>1.$ Then i) $\left\{  \Lambda\left(  km-k+1,y,q\right)
\right\}  _{q>1,y\in\mathbb{R}}$ is a $k-$ step distributions that is $k-$
fold distribution obtained from randomly selecting $z$ from discrete
distribution $\left\{  \Lambda\left(  m,y,q\right)  \right\}  .$ ii) Suppose
$y$ is selected according to $\left\{  \Lambda\left(  km-k+1,x,q\right)
\right\}  _{x\in\mathbb{R}}$ and for each so selected $y$ we select $z$
according to $\left\{  \Lambda\left(  jm-j+1,y,q\right)  \right\}  ,$ then it
is equivalent to select $z$ from distribution $\left\{  \Lambda\left(  \left(
k+j\right)  m-\left(  k+j\right)  +1,x,q\right)  \right\}  $
\end{theorem}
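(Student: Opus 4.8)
The plan is to deduce this theorem from Lemma~\ref{zlozenie} by a straightforward induction on the number of composition steps, since Lemma~\ref{zlozenie} already packages the essential content: composing a one‑step family $\Lambda(m,\cdot,q)$ with a one‑step family $\Lambda(n,\cdot,q)$ produces precisely the family $\Lambda(m+n-1,\cdot,q)$, with matching supports (via Lemma~\ref{dodawanie} and part~ii) of Lemma~\ref{uproszczenia}, which give $\chi_i(\chi_j(y,q),q)=\chi_{i+j}(y,q)$ and $(\mathbf{m})+(\mathbf{n})=(\mathbf{m+n-1})$) and matching masses. So the whole statement is really an exercise in iterating this identity and checking the arithmetic of the index shifts.

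For part~i), I would argue by induction on $k$. The base case $k=1$ is the assertion that $\{\Lambda(m,y,q)\}$ is its own one‑step distribution, which is trivial. For the inductive step, assume the $k$‑fold composition of $\{\Lambda(m,\cdot,q)\}$ is $\{\Lambda(km-k+1,\cdot,q)\}$. Composing once more with $\{\Lambda(m,\cdot,q)\}$, Lemma~\ref{zlozenie} (applied with the two parameters $km-k+1$ and $m$) shows the result is $\{\Lambda((km-k+1)+m-1,\cdot,q)\}=\{\Lambda((k+1)m-(k+1)+1,\cdot,q)\}$, since $(km-k+1)+m-1=(k+1)m-k=(k+1)m-(k+1)+1$. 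This closes the induction. The point to be careful about is that Lemma~\ref{zlozenie} is stated for composing \emph{two} families, so one must invoke it with the first family being the already‑iterated $\Lambda(km-k+1,\cdot,q)$ rather than $\Lambda(m,\cdot,q)$; this is legitimate because the conclusion of Lemma~\ref{zlozenie} identifies the composite as a genuine $\Lambda(\cdot,\cdot,q)$ family, so the hypotheses of the lemma are again met at the next stage.

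For part~ii), having established in i) that $\{\Lambda(km-k+1,\cdot,q)\}$ and $\{\Lambda(jm-j+1,\cdot,q)\}$ are the $k$‑step and $j$‑step distributions of the same one‑step family, composing them is by definition the $(k+j)$‑step distribution, which by i) again equals $\{\Lambda((k+j)m-(k+j)+1,\cdot,q)\}$. Alternatively, and more self‑containedly, one applies Lemma~\ref{zlozenie} directly with the two parameters $km-k+1$ and $jm-j+1$: the composite is $\{\Lambda((km-k+1)+(jm-j+1)-1,\cdot,q)\}$, and $(km-k+1)+(jm-j+1)-1=(k+j)m-(k+j)+1$, which is exactly the claimed family.

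The main obstacle is not in the logical structure — which is essentially "iterate Lemma~\ref{zlozenie}" — but in verifying that the hypotheses of Lemma~\ref{zlozenie} genuinely propagate, i.e.\ that at every stage the composite distribution really is one of the distributions $\Lambda(\ell,y,q)$ (supported on $\mathbf{S}_\ell(y,q)$ with the prescribed masses satisfying the moment equations of Remark~\ref{przezHermity}), so that Lemma~\ref{zlozenie} can be reapplied. This is exactly the "Moreover" clause of Lemma~\ref{zlozenie}, so once that clause is in hand the induction runs without friction; and the final sentence of the section then records the desired consequence, namely that the conditional distributions of the Bryc process with $q>1$ and $\rho=1/q^{(m-1)/2}$ (so that $\rho^{2k}=1/q^{k(m-1)}=1/q^{(km-k+1)-1}$, matching the admissible values $\rho^2\in\{1/q,1/q^2,\ldots\}$) satisfy the Chapman--Kolmogorov equation, which is what is needed to invoke Kolmogorov's extension theorem and thereby settle the existence question of Bryc for $q>1$.
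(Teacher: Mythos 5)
Your proposal is correct and follows essentially the same route as the paper: both reduce the theorem to iterated applications of Lemma~\ref{zlozenie} (with the index arithmetic $(km-k+1)+(jm-j+1)-1=(k+j)m-(k+j)+1$), the paper handling part~i) by composing with $n=m$ and ``repeating similar arguments'' where you spell out the induction, and part~ii) by invoking the lemma for the parameters $km-k+1$ and $jm-j+1$ exactly as you do. Your explicit remark that the ``Moreover'' clause of Lemma~\ref{zlozenie} is what licenses re-application at each stage is a useful clarification of a step the paper leaves implicit, but it is not a different argument.
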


\begin{proof}
i) Using Lemma \ref{zlozenie} and taking $n=m$ we deduce that $2-$fold
distribution of $\left\{  \Lambda\left(  m,y,q\right)  \right\}
_{y\in\mathbb{R}}$ is equal to $\left\{  \Lambda\left(  2m-1,y,q\right)
\right\}  _{y\in\mathbb{R}}.$ Repeating similar arguments we get the assertion.

ii) Using assertion i) and Lemma \ref{zlozenie} once for $km-k+1$ and secondly
for $jm-j+1$ and using Lemma \ref{dodawanie} we get ii).
\end{proof}

Keeping in mind that one dimensional distributions (determined by $q>1)$ of
such process exist (this follows from Berezanskii's result as reported in
addendum 5 (page 26) of \cite{Akhizer}), process is Markov and its conditional
distributions satisfy Chapman-Kolmogorov equation, we deduce that Bryc process
for $q>1$ and $\rho\in\left\{  1/q^{m}:m\in\mathbb{N}\right\}  $ exists.

\section{Proofs\label{dowody}}

\begin{proof}
[Proof of Lemma \ref{zamiana}]Let us denote \newline$A_{n}\left(  \theta
,\phi,q\right)  \allowbreak=\allowbreak e^{-in\phi}\left(  -q^{\left(
1-n\right)  /2}e^{i\left(  \theta+\phi\right)  },-q^{\left(  1-n\right)
/2}e^{i\left(  -\theta+\phi\right)  };q\right)  _{n}.$ By the definition of
Pochhammer symbol we have $A_{n}\left(  \theta,\phi,q\right)  $ $\allowbreak
=\allowbreak e^{-in\phi}\allowbreak\times\allowbreak\prod_{k=0}^{n-1}\left(
1+e^{i\left(  \theta+\phi\right)  }q^{\left(  1+2k-n\right)  /2}\right)
\allowbreak\times\allowbreak(1+e^{i\left(  -\theta+\phi\right)  }q^{\left(
1+2k-n\right)  /2}).$ Inspecting this product we easily note that
$A_{n}\left(  \theta,\phi,q\right)  \allowbreak/\allowbreak A_{n-2}\left(
\theta,\phi,q\right)  \allowbreak=\allowbreak e^{-2i\phi}\left(  1+e^{i\left(
\theta+\phi\right)  }q^{\left(  1-n\right)  /2}\right)  \allowbreak
\times\allowbreak\left(  1+e^{i\left(  \theta+\phi\right)  }q^{\left(
n-1\right)  /2}\right)  \allowbreak\times\allowbreak(1+e^{i\left(
-\theta+\phi\right)  }q^{\left(  1-n\right)  /2})\allowbreak\allowbreak
\times\allowbreak(1+e^{i\left(  -\theta+\phi\right)  }q^{\left(  n-1\right)
/2})\allowbreak$. This ratio consists of $5$ factors. Multiplying then
according to the scheme (1)\{[(2)(3)][(4)(5)]\} we get $A_{n}\left(
\theta,\phi,q\right)  /A_{n-2}\left(  \theta,\phi,q\right)  \allowbreak
=\allowbreak4$\newline$\left(  x^{2}\allowbreak+\allowbreak y^{2}%
\allowbreak+\allowbreak xy\left(  q^{\left(  1-n\right)  /2}\allowbreak
+\allowbreak q^{\left(  n-1\right)  /2}\right)  \allowbreak+\allowbreak
(q^{n-1}\allowbreak+\allowbreak q^{-1+n}-2\right)  /4)\allowbreak
=\allowbreak4t_{n-1}\left(  x,y,q\right)  ,$ where $x=\cos\theta$ and
$y=\cos\phi.$ Keeping in mind that $A_{1}\left(  \theta,\phi,q\right)
\allowbreak=\allowbreak e^{-i\phi}\left(  1+e^{i\left(  \theta+\phi\right)
}\right)  \allowbreak\times\allowbreak\left(  1+e^{i\left(  -\theta
+\phi\right)  }\right)  \allowbreak=\allowbreak2(x+y)$ it is easy notice that
the assertion is true.
\end{proof}

\begin{proof}
[Proof of the Lemma \ref{dodawanie}]Let us first notice that sums $i+j,$
$i\in\left(  \mathbf{m}\right)  $ , $j\in\left(  \mathbf{n}\right)  $ are
either all even (when $\left(  \mathbf{n}\right)  $ and $\left(
\mathbf{m}\right)  $ contain either both even or odd numbers) or all odd (when
one of $\left(  \mathbf{n}\right)  $ and $\left(  \mathbf{m}\right)  $
contains odd and the other even numbers). Thus we easily see that the set
$\left\{  i+j:i\in\left(  \mathbf{m}\right)  ,~j\in\left(  \mathbf{n}\right)
\right\}  $ is of the form of some the sets $\left(  \mathbf{l}\right)  $. To
see how much is $l$ let us examine the maximal value of the sum $i+j,$
$i\in\left(  \mathbf{m}\right)  ,~j\in\left(  \mathbf{n}\right)  .$ By trivial
inspection we see that it is equal to $m+n-1.$
\end{proof}

\begin{proof}
[Proof of Corollary \ref{factorization}]To prove assertion crucial are the
following observations that are simple consequences of (\ref{H_h}) and
(\ref{B_na_H}) .%
\begin{align*}
B_{n}\left(  y|q\right)   &  =i^{n}q^{n\left(  n-1\right)  /2}h_{n}\left(
iy\frac{\sqrt{q-1}}{2}|\frac{1}{q}\right)  /\left(  q-1\right)  ^{n/2},\\
H_{n}\left(  x|q\right)   &  =\left(  -i\right)  ^{n}h_{n}\left(
ix\frac{\sqrt{q-1}}{2}|q\right)  /\left(  q-1\right)  ^{n/2}.
\end{align*}
Now, observing that $h_{n}\left(  -x|q\right)  \allowbreak=\allowbreak\left(
-1\right)  ^{n}h_{n}\left(  x|q\right)  ,$ we have%
\begin{gather*}
p_{n}\left(  x|y,\frac{1}{q^{\left(  n-1\right)  /2}},q\right)  =\sum
_{k=0}^{n}\left[
\begin{array}
[c]{c}%
n\\
k
\end{array}
\right]  _{q}\frac{1}{q^{\left(  n-1\right)  \left(  n-k\right)  /2}}%
B_{n-k}\left(  y|q\right)  H_{k}\left(  x|q\right)  =\frac{i^{n}}{\left(
q-1\right)  ^{n/2}}\\
\times\sum_{k=0}^{n}\left[
\begin{array}
[c]{c}%
n\\
k
\end{array}
\right]  _{q}q^{-\frac{\left(  n-1\right)  \left(  n-k\right)  }{2}%
+\frac{\left(  n-k\right)  \left(  n-k-1\right)  }{2}}h_{n-k}\left(
iy\frac{\sqrt{q-1}}{2}|\frac{1}{q}\right)  h_{k}\left(  -ix\frac{\sqrt{q-1}%
}{2}|q\right) \\
=\frac{i^{n}}{\left(  q-1\right)  ^{n/2}}\sum_{k=0}^{n}\left[
\begin{array}
[c]{c}%
n\\
k
\end{array}
\right]  _{q}q^{-k\left(  n-k\right)  /2}h_{n-k}\left(  iy\frac{\sqrt{q-1}}%
{2}|\frac{1}{q}\right)  h_{k}\left(  -ix\frac{\sqrt{q-1}}{2}|q\right) \\
=\frac{2^{n}i^{n}}{\left(  q-1\right)  ^{n/2}}\left\{
\begin{array}
[c]{ccc}%
\prod_{j=1}^{k}t_{2j-1}\left(  -ix\frac{\sqrt{q-1}}{2},iy\frac{\sqrt{q-1}}%
{2},q\right)  & if & n=2k\\
\prod_{j=0}^{k}t_{2j}\left(  -ix\frac{\sqrt{q-1}}{2},iy\frac{\sqrt{q-1}}%
{2},q\right)  & if & n=2k+1
\end{array}
\right.  .
\end{gather*}
Now $t_{n}\left(  -ix\frac{\sqrt{q-1}}{2},iy\frac{\sqrt{q-1}}{2},q\right)
\allowbreak=\allowbreak-\frac{\left(  q-1\right)  }{4}\left(  -x^{2}%
-y^{2}+xy\left(  q^{n/2}+q^{-n/2}\right)  +\frac{q^{n}+q^{-n}-2}{q-1}\right)
\allowbreak=\allowbreak\left(  -i\right)  ^{n}\frac{\left(  q-1\right)  }%
{4}v_{n}\left(  x,-y,q\right)  $ and $t_{0}\left(  -ix\frac{\sqrt{q-1}}%
{2},iy\frac{\sqrt{q-1}}{2},q\right)  \allowbreak=\allowbreak\left(  -i\right)
\frac{\sqrt{q-1}}{2}\left(  x-y\right)  $ we get the assertion.
\end{proof}

\begin{proof}
[Proof of the Lemma]\ref{uproszczenia}i) We get this assertion by direct
simple calculation. To prove ii) we use i) and proceed as follows:%
\begin{align*}
\chi_{m}\left(  \chi_{n}\left(  y,q\right)  ,q\right)   &  =\chi_{n}\left(
y,q\right)  \frac{q^{m/2}+q^{-m/2}}{2}\allowbreak+\allowbreak\sqrt{\chi
_{n}^{2}\left(  y,q\right)  +\frac{4}{q-1}}\frac{q^{m/2}-q^{-m/2}}{2}\\
&  =y\frac{q^{\left(  m+n\right)  /2}+q^{-\left(  m+n\right)  /2}}{2}%
+\sqrt{y^{2}+\frac{4}{q-1}}\frac{q^{\left(  m+n\right)  /2}-q^{-\left(
m+n\right)  /2}}{2}.
\end{align*}

\end{proof}

\begin{proof}
[Proof of Lemma \ref{zlozenie}]From lemmas \ref{uproszczenia} and
\ref{dodawanie} we know that there are $n+m-1$ points of the form $\chi
_{i}\left(  \chi_{j}\left(  y,q\right)  ,q\right)  $ with $i\in\left(
\mathbf{n}\right)  $ and $j\in\left(  \mathbf{m}\right)  .$ Moreover we know
that these points are the points of the support of new distribution defined in
Lemma \ref{zlozenie}. We will show that distribution concentrated at points of
$\mathbf{S}_{n+m-1}$ with masses $\lambda_{k}^{\ast},$ $k\in\mathbf{S}%
_{n+m-1}$ satisfy the following system of equations: $\sum_{k\in\left(
\mathbf{n+m}-1\right)  }\lambda_{k}^{\ast}\left(  y,q\right)  =1,$ $\sum
_{k\in\left(  \mathbf{n+m}-1\right)  }\lambda_{k}^{\ast}\left(  y,q\right)
H_{r}\left(  \chi_{k}|q\right)  =\frac{1}{q^{r\left(  n-1+m-1\right)  /2}%
}H_{r}\left(  y|q\right)  $ for $r=1,\ldots,m+n-2.$ Using the fact that
$\chi_{k}\left(  y,q\right)  =\chi_{j}\left(  \chi_{i}\left(  y,q\right)
,q\right)  $ for $i+j=k$ and using definition of the masses $\lambda_{k}%
^{\ast}$ we have:%
\begin{align*}
\sum_{k\in\left(  \mathbf{n+m-1}\right)  }\lambda_{k}^{\ast}H_{r}\left(
\chi_{k}|q\right)   &  =\sum_{i\in\left(  \mathbf{m}\right)  }\lambda
_{i}\left(  y,q\right)  \sum_{j\in\left(  \mathbf{n}\right)  }\lambda
_{j}\left(  \chi_{i}\left(  y,q\right)  ,q\right)  H_{r}\left(  \chi
_{j}\left(  \chi_{i}\left(  y,q\right)  |q\right)  \right) \\
&  =\sum_{i\in\left(  \mathbf{m}\right)  }\lambda_{i}\left(  y,q\right)
\frac{1}{q^{r\left(  n-1\right)  /2}}H_{r}\left(  \chi_{i}\left(  y,q\right)
|q\right) \\
&  =\frac{1}{q^{r\left(  n-1\right)  /2}}\frac{1}{q^{r\left(  m-1\right)  /2}%
}H_{r}\left(  y|q\right)  =\frac{1}{q^{r\left(  n+m-2\right)  /2}}H_{r}\left(
y|q\right)  ,
\end{align*}
$r=1,\ldots,m+n-2.$

By Remark \ref{przezHermity} we deduce that masses $\lambda_{k}^{\ast}$
satisfy also relationships :%
\[
\sum_{k\in\mathbf{S}_{n+m-1}}\lambda_{k}^{\ast}\left(  y,q\right)
p_{r}\left(  \chi_{k}\left(  y,q\right)  ,y,1/q^{(n+m-2)/2},q\right)  =0,
\]
showing that indeed $\lambda_{k}^{\ast}=\lambda_{k,m+n-1}\left(  y,q\right)
,$ $k\in\mathbf{S}_{n+m-1}.$
\end{proof}

\end{document}